\date{}
\newcommand{\ee}{\varepsilon}
\newcommand{\pa}{\partial}
\newtheorem{theorem}{Theorem}
\newtheorem{lemma}{Lemma}
\theoremstyle{definition}
\newtheorem{remark}{Remark}
\newcommand{\uo}{u}
\newcommand{\Pee}[1]{P_\ee\left(#1\right)}
\newcommand{\email}[1]{\texttt{\href{mailto:#1}{#1}}}
\begin{document}

\title{
A nonlocal memory strange term arising in the \\
critical scale homogenisation of
a diffusion equation with a\\
dynamic boundary condition
}

\author{
	J.I. Díaz\thanks{Instituto de Matemática Interdisciplinar, Universidad Complutense de Madrid. \email{jidiaz@ucm.es}} 
	\and 
	D. Gómez-Castro\thanks{Instituto de Matemática Interdisciplinar, Universidad Complutense de Madrid. \email{dgcastro@ucm.es}} 
	\and 
	T.A. Shaposhnikova\thanks{Faculty of Mechanics and Mathematics. Moscow State University. \email{shaposh.tan@mail.ru}} 
	\and 
	M.N. Zubova\thanks{Faculty of Mechanics and Mathematics. Moscow State University. \email{zubovnv@mail.ru}}
}

\maketitle

\begin{abstract}
	Our main interest in this paper is the study of homogenised limit of a parabolic equation with a nonlinear dynamic boundary condition of the micro-scale model set on a domain with periodically place particles. 
	We focus on the case of particles (or holes) of critical diameter with respect to the period of the structure. 
	Our main result proves the weak convergence of the sequence of solutions of the original problem to the solution of a reaction-diffusion parabolic problem containing a ``strange term''. 
	The novelty of our result is that this term is a nonlocal memory solving an ODE. 
	We prove that the resulting system satisfies a comparison principle.
	
	\noindent \textbf{Keywords:} critically scaled homogenization; perforated media; dynamical boundary conditions, strange term, nonlocal memory reaction.	
	
	\noindent \textbf{Subject classification:} 35B27, 35K57
\end{abstract}

\tableofcontents

\section{Introduction and statement of results}
A well-known effect in our days in homogenisation theory is the appearance
of some changes in the structural modelling of the homogenised problem for
suitable critical size of the elements configuring the ``micro-structured'' medium which exhibits small-scale spatial heterogeneities or obstacles (also denoted as \textit{particles} in the context of Chemical Engineering). From the mathematical view point a first result was due to V. Marchenko and E. Hruslov \cite{Marchenko}. 
The attention on this effect considerably increased after the presentation of the appearance of some ``strange terms'' due to D. Cioranescu and F.Murat \cite{Ci-Mu}. 
Both articles dealt with linear equations with Neumann and Dirichlet boundary conditions, respectively. In many other papers on critically homogenisation problems the modelling of the reaction kinetics at the micro or nano scales is given by a nonlinear Robin type boundary condition on the surface of the chemical particles, complemented by a pure diffusion equation in the exterior spatial domain to them. 
It is impossible to mention all of them here (we send the reader to the papers on the homogenisation of the problems with classical boundary conditions of the Robin type, including the nonlinear Robin type condition \cite{Jager11,ZSh,ShZ,GLPSHZ,6} and the bibliographic exposition in our previous paper \cite{DGPS}): obviously, the nature of this ``strange term'' may be completely different according to the peculiarities of the formulation in consideration.\\

In this paper we shall consider some dynamic problems in which, depending on suitable characteristic scales, the surface reaction on the boundary of the particle is also dynamic and so, its formulation in terms of Robin type boundary conditions must be modified. 
We recall that the modelling of many different problems involving dynamic boundary conditions is very natural in many different areas and that its mathematical treatment attracted the attention of very distinguished authors since the beginning of the past century. 
A quite complete list of references dealing with nonlinear problems with dynamic boundary conditions, starting already in 1901, can be found, e.g., in the survey papers \cite{BeDiVr} and \cite{Bandle-Reichel}. 
The partial differential equation is sometimes an elliptic equation (and thus there is a great contrast between a stationary interior law and a dynamic boundary condition). 
Nevertheless, the dynamic boundary condition may coexists with a parabolic equation (linear or not). For some recent references\ see, e.g.,\cite{Amman-Fila,Arrieta-Quitner-Bernal,Escher,Bandle-Reichel} and \cite{Vazquez-Vitillaro}.\\

As said before, our main interest in this paper concerns the modification of
the homogenised equation with respect to the nonlinear terms involved in the micro-scale. For the sake of simplicity in the presentation we shall
consider here only the case of a linear surface reaction term but it seems possible to adapt our techniques of proof to the consideration of quite general nonlinear reactions terms as in our paper \cite{DGPS}.\\

To be more precise, as usual, the heterogeneity scale is assumed to be much smaller than the macroscopic scale and that the microscopic heterogeneities (particles or holes) are periodically placed in the spatial domain giving rise to a parameter $\epsilon \rightarrow 0$. 
In fact we work on the spatial domain $\Omega _{\varepsilon }$, obtained by removing $G_{\epsilon }$, a collection of small particles. \\

More specifically, let $\Omega$ be a bounded domain in $\mathbb{R}^{n}$, $n\ge 3$, with smooth boundary $\pa\Omega$. We denote the unit cube by $Y=(-1/2,1/2)^{n}$. Let 
\begin{equation*} 
	G_{0}=\{x:|x|<1\}. 
\end{equation*} 
for $\delta > 0$ and $B \subset \mathbb R^n$ we denote by $\delta{B}=\{x|\delta^{-1}x\in B\}$.
For a positive parameter ${\ee} > 0$ we introduce the domain
$$
\widetilde{\Omega}_{\ee}=\{x\in\Omega \mid d(x,\partial\Omega)>2{\ee}\}.\\
$$
We set
$$
G_{\ee}=\bigcup_{j\in \Upsilon_{\ee}}(a_{\ee}G_{0}+{\ee}j)=\bigcup_{j\in\Upsilon_{\ee}}G^{j}_{\ee},\\
$$
where $\Upsilon_{\ee}=\{j\in \mathbb{Z}^{n}:\overline{G^{j}_{\ee}}\subset {\ee}Y+{\ee}j,\,G^{j}_{\ee}\cap\overline{\Omega}_{\ee}\ne \emptyset\}$, $|\Upsilon_{\ee}|\cong d{\ee}^{-n}$, $d=const>0$, $\mathbb{Z}^{n}$ is the set of vectors with integer coordinates, $a_{\ee}=C_{0}{\ee}^{\gamma}$ is the radius of the particles (or perforations).
We denote by $P^{j}_{\ee}$ the center of the cell of periodicity $Y^{j}_{\ee}$. Let us note that
$$
\overline{G^{j}_{\ee}}\subset T^{j}_{{\ee}/4}\subset Y^{j}_{\ee}, \\
$$
where $T^{j}_{\rho}$ is the ball with the center at the point $P^{j}_{\ee}$ and with radius $\rho$.
Finally, we define the sets
$$
\Omega_{\ee}=\Omega\setminus\overline{G_{\ee}},\,\,S_{\ee}=\pa{G_{\ee}},\,\,\pa\Omega_{\ee}=\pa\Omega\cup S_{\ee}.\\
$$
In $Q_{\ee}^{T}=\Omega_{\ee}\times (0,T)$ we consider the following parabolic problem with a dynamical boundary condition
\begin{equation}
\begin{dcases}
 \alpha \frac{ \partial u_{\ee}}{\partial t}-\Delta{u_{\ee}}=f(x,t), & (x,t)\in Q^{T}_{\ee},\\
 {\ee}^{-\gamma}  \beta \frac{ \partial u_{\ee}}{\partial t}+\pa_{\nu}u_{\ee}+{\ee}^{-\gamma}\lambda{u_{\ee}}={\ee}^{-\gamma}g(x,t), & (x,t)\in S_{\ee}^{T}=S_{\ee}\times (0,T),\\
u_{\ee}=0, & (x,t)\in \Gamma^{T}=\pa\Omega\times (0,T),\\
\alpha u_{\ee}(x,0)=0, & x\in \Omega_{\ee}, \\
\beta u_{\ee}(x,0)=0, & x\in S_\ee,
\end{dcases}
\label{eq:1}
\end{equation}
where $ \alpha,  \beta \ge 0$,  $Q^{T}=\Omega\times (0,T)$, $\lambda>0$ is constant, $\nu$ is the unit outward normal vector to the boundary of the cylinder $Q^{T}_{\ee}$, $g\in \mathcal C^1(\overline {Q^T})$ (for the sake of simplicity of the exposition), 
$$\gamma=\frac{n}{n-2} \qquad \textrm{and} \qquad n\ge 3 $$
and, either
\begin{equation}
	\tag{F$_1$}
	\alpha > 0 \textrm{ and } f\in L^{2}(Q^{T})
\end{equation}
or
\begin{equation}
	\tag{F$_2$}
	\beta > 0 \textrm{ and } f\in H^1(0,T;L^2(\Omega))
\end{equation}

We point out that the linear dynamic boundary condition contains a parameter ${\varepsilon }^{-\gamma }$, where $\gamma $ has the critical value, on the boundary of particles of the critical size.\\

In previous papers on the homogenisation of the problems in perforated domains with dynamic boundary conditions 
(e.g.\ \cite{Timofte,Timofte2008,Angulano,Wang}) the diameter of the particles (or holes) was assumed of the same order as a period of the structure. 
As consequence the homogenised reaction term (now appearing in the interior of the whole domain $\Omega $) preserved the same structure assumption than the surface reaction term in the micro-model formulation. 
That was in consonance with many other studies on reaction-diffusion problems (see, e.g.\ \cite{CDLT} and its references).\\

Our main goal in this paper is to prove the appearance of a
``strange term'' in the effective parabolic
problem and to characterise it in terms of the surface reaction term than of
the micro-model formulation. As we shall see, this new term appears even if
there is no surface reaction term in the micro-model formulation (i.e. for $\lambda =0$). Our main result in this paper proves the weak convergence of
the sequence of (the extension of) solutions of the original problems to the
solution of the following homogenized problem, as ${\varepsilon }\rightarrow
0$:
\begin{theorem}
\label{thm:main}
Let $n\ge 3$, $\gamma=\frac{n}{n-2}$ and let $u_{\ee}$ be the unique weak solution of the problem \eqref{eq:1}.
Then, there exists an extension $\widetilde {u_\ee} \in L^2(0,T; H_0^1(\Omega))$ of $u_\ee$ and function $\uo \in L^2(0,T; H_0^1(\Omega))$ such that 
\begin{subequations} 
\label{eq:convergence}
\begin{align}
	\widetilde{u_{\ee}}&\rightharpoonup \uo \quad \textrm{weakly in } L^{2}(0,T;H^{1}_{0}(\Omega)),
		\label{eq:25}  \ \\
	\pa_{t}\widetilde{u_{\ee}}&\rightharpoonup \pa_{t}\uo\quad \textrm{weakly in } L^{2}(0,T; L^{2}(\Omega)),
		\label{eq:26}  \ \\
	\widetilde{u_{\ee}}&\to \uo\quad \textrm{strongly in } L^{2}(0,T; L^{2}(\Omega)).
		\label{eq:27}  \ 
\end{align}
\end{subequations}
This limit function $\uo$ is the unique weak solution of the system
\begin{equation}
	\label{eq:28}
		\begin{dcases}
			 \alpha \frac{\partial \uo}{\partial t} - \Delta \uo + (n-2)C_{0}^{n-2}\omega_{n} H = f & Q^T,\\
			 \beta \frac{\partial H}{\partial t} +  \frac{n-2}{C_0} H = \lambda (\uo - H) +  \beta \frac{\partial \uo}{\partial t} - g & Q^T, \\
			\uo = 0 & \partial \Omega \times (0,+\infty), \\
			\alpha \uo(x,0) = 0 & \Omega,\\
			\beta H(x,0) = 0 & \Omega.
		\end{dcases}
	\end{equation}
\end{theorem}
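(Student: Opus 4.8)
The strategy is the classical Cioranescu–Murat oscillating-test-function method, adapted to the dynamic boundary condition. First I would establish uniform \emph{a priori} estimates: testing \eqref{eq:1} with $u_\ee$ and using the dynamic boundary term one gets, after integration by parts and a Gronwall argument, bounds for $\sqrt\alpha\, u_\ee$ in $L^\infty(0,T;L^2(\Omega_\ee))$, for $\nabla u_\ee$ in $L^2(Q_\ee^T)$, for $\ee^{-\gamma/2}\sqrt\beta\, u_\ee$ in $L^\infty(0,T;L^2(S_\ee))$ and for $\ee^{-\gamma/2}u_\ee$ in $L^2(S_\ee^T)$. Under (F$_1$) or (F$_2$), differentiating the equation in time (or testing with $\partial_t u_\ee$) yields a uniform bound on $\partial_t u_\ee$ in $L^2(Q_\ee^T)$. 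A standard extension operator $P_\ee\colon H^1(\Omega_\ee)\to H^1_0(\Omega)$ with $\varepsilon$-independent norm (available because the particles are well inside the cells, $\overline{G^j_\ee}\subset T^j_{\ee/4}$) then gives $\widetilde{u_\ee}$ satisfying the bounds behind \eqref{eq:25}–\eqref{eq:27}; compactness (Aubin–Lions for the strong $L^2$ convergence) produces the limit $\uo$ and the convergences \eqref{eq:convergence}, up to a subsequence.

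The core is identifying the limit equation, in particular the strange term $(n-2)C_0^{n-2}\omega_n H$ and the ODE governing $H$. I would introduce the auxiliary function $W_\ee$ built cell-by-cell from the capacitary potential of $G_0$: on each $T^j_{\ee/4}$ set $W_\ee(x)=a_\ee^{n-2}|x-P^j_\ee|^{-(n-2)}$ near the particle, interpolated to $1$ at the particle boundary and to $0$ at radius $\ee/4$, and $W_\ee\equiv 0$ outside $\bigcup_j T^j_{\ee/4}$; the critical scaling $\gamma=\frac{n}{n-2}$ is exactly what makes $W_\ee\rightharpoonup 0$ weakly while $\nabla W_\ee$ does \emph{not} vanish in energy, concentrating a capacity density $(n-2)C_0^{n-2}\omega_n$ per unit volume. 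Using $\varphi(1-W_\ee)+W_\ee h_\ee$ as test function in the weak formulation of \eqref{eq:1}, where $\varphi\in C_0^\infty(Q^T)$ and $h_\ee$ is a suitable function encoding the boundary trace of $u_\ee$ on $S_\ee$, and passing to the limit, the boundary integrals $\ee^{-\gamma}\int_{S_\ee}(\cdots)$ converge — via the change of variables on each sphere, $|S_\ee^j|\sim \omega_n a_\ee^{n-1}$, $\ee^{-\gamma}|\Upsilon_\ee|a_\ee^{n-1}\to \text{const}$ — to a volume integral against the limit $H$ of the (rescaled) boundary traces. The normal-derivative flux term $\int_{S_\ee}\partial_\nu u_\ee$, rewritten by harmonicity of $W_\ee$ as a volume term, produces the coupling: one obtains \eqref{eq:28} with $H$ characterised as the limit of the surface averages of $u_\ee$, satisfying the ODE in the second line with coefficients read off from the capacity of $G_0$ and the $\ee^{-\gamma}\beta\,\partial_t$, $\ee^{-\gamma}\lambda$, $\ee^{-\gamma}g$ scalings.

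The main obstacle is the rigorous passage to the limit in the dynamic boundary terms, since $\partial_t u_\ee$ appears on $S_\ee$ multiplied by the singular weight $\ee^{-\gamma}\beta$, and one must show that the surface traces, after rescaling, converge to a well-defined $H(x,t)\in L^2(Q^T)$ solving an ODE in $t$ pointwise in $x$. I would handle this by deriving the ODE for the cell-averaged traces $H_\ee^j(t)=\fint_{S_\ee^j}u_\ee$ directly from the boundary condition integrated over $S_\ee^j$ — the surface Laplace–Beltrami term drops, leaving $\ee^{-\gamma}\beta\,\dot H_\ee^j + \text{(flux average)} + \ee^{-\gamma}\lambda H_\ee^j = \ee^{-\gamma}g$ — then relate the flux average to $u_\ee$ at radius $\ee/4$ (essentially $\uo$) through the capacitary profile, obtaining in the limit exactly $\beta\,\partial_t H + \frac{n-2}{C_0}H = \lambda(\uo-H)+\beta\,\partial_t\uo - g$. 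Finally, uniqueness of the weak solution of the coupled system \eqref{eq:28}, which also upgrades the convergence from subsequences to the full sequence, follows from an energy estimate for the difference of two solutions: test the PDE with $u^1-u^2$ and the ODE with $H^1-H^2$, add, and absorb the cross terms; the dissipative signs of $-\Delta$, of $\frac{n-2}{C_0}H^2$ and of $\lambda H^2$ give the contraction, and Gronwall closes the argument with the zero initial data.
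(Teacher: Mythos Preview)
Your overall strategy---oscillating test functions built from the capacitary potential $W_\ee$---is correct, and the a~priori estimates and extension part match the paper. However, the way you propose to identify the strange term differs from the paper in a substantive way, and your version has a real gap.

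The paper does \emph{not} define $H_\ee^j$ as the surface average of $u_\ee$ over $\partial G_\ee^j$. Instead it passes to a Minty/Tartar variational-inequality formulation: from the weak form of \eqref{eq:1} and monotonicity one derives inequality \eqref{eq:29}, in which $u_\ee$ is replaced by an arbitrary test function $\phi$ in all the boundary and time-derivative terms. Then $H_\ee^j(t)$ is \emph{constructed} as the solution of the ODE \eqref{eq:30} with data $\phi=\psi(x)\eta(t)$ evaluated at the cell centre $P_\ee^j$; this ODE is chosen precisely so that, with the oscillating test function $\psi\eta - w_\ee^j H_\ee^j$, the sum of all integrals over $S_\ee\times(0,T)$ cancels exactly (up to $o(1)$). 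Because $H_\ee^j=H_{\psi\eta}(P_\ee^j,t)$ is built from a smooth \emph{known} function, its convergence to $H_{\psi\eta}(x,t)$ is immediate, and one passes to the limit in the variational inequality; the equation is then recovered by taking $\phi=\uo+\tau\varphi$ and letting $\tau\to 0^\pm$.

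Your route instead attempts to track the actual boundary traces of $u_\ee$. The problematic step is ``relate the flux average to $u_\ee$ at radius $\ee/4$ through the capacitary profile'': $u_\ee$ is not harmonic in the annulus $T^j_{\ee/4}\setminus \overline{G_\ee^j}$ (it solves a parabolic equation with source $f$), so there is no a~priori reason its radial structure is governed by $w_\ee^j$; turning the averaged flux $|\partial G_\ee^j|^{-1}\int_{\partial G_\ee^j}\partial_\nu u_\ee$ into $\frac{n-2}{C_0}(H_\ee^j-\uo)$ plus a vanishing remainder would require quantitative regularity of $u_\ee$ near the particles that does not follow from the energy estimates alone. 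Likewise, proving that the piecewise-constant extension of $(H_\ee^j)_j$ converges in $L^2(Q^T)$ to some $H$ is a genuine compactness issue you have not addressed. The paper's device of building $H_\ee^j$ from the test function, combined with the monotonicity inequality, sidesteps both difficulties entirely.
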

System \eqref{eq:28} is not a standard parabolic problem (since there is no diffusion term for $H$). Nevertheless,  there are some systems in the literature  keeping several common points with such a  system. See, for instance \cite{DiSt} and \cite{Chipot}.  \\

Notice that, when $\beta = 0$, we recover the known equation for the 
\emph{strange term} in the elliptic ($\alpha = 0$) and parabolic ($\alpha > 0$) cases (see \cite{Gon,DGPS,Jager10}) 
\begin{equation}
	\label{eq:defn H when beta 0}
	\frac{n-2}{C_{0}}H=\lambda (\uo-H)-g.
\end{equation}
Moreover, since the equation for $H$ contains a term $\partial \uo/\partial t$,
it seems natural to use the change of variable
\begin{equation}
v=\uo-H.
\end{equation}%
Hence system \eqref{eq:28} can be equivalently written as 
\begin{equation}
	\label{eq:28b} \tag{\ref{eq:28}b}
	\begin{dcases}
		 \alpha \frac{\partial \uo}{\partial t} - \Delta \uo + (n-2)C_{0}^{n-2}\omega_{n} (\uo - v) = f & Q^T,\\ 
		 \beta \frac{\partial v}{\partial t} + \left (\frac{n-2}{C_0} + \lambda \right) v = \frac{n-2}{C_0} \uo + g & Q^T \\
		\uo = 0 & \partial \Omega \times (0,T), \\
		\alpha \uo(x,0) = 0 & \Omega,\\
		\beta v(x,0) = 0 & \Omega.
	\end{dcases} 
\end{equation}
We will prove in \Cref{sec:uniqueness of the limit problem} that it has a unique weak solution.
Furthermore, if $f,g\geq 0$ then $u, v\geq 0$ and, hence $H \le u$. 

In formulation \eqref{eq:28b} we can solve the first order ODE for $v$ explicitly, and solving for $H$ we obtain, for $\beta > 0$ 
\[
	H(x,t)=\uo(x,t)-\frac{1}{ \beta}\int\limits_{0}^{t}\left( \frac{n-2}{C_{0}}\uo(x,s)+g(x,s)\right) e^{-\frac{\lambda +\frac{n-2}{C_{0}}}{ \beta}(t-s)}ds.
\]
Thus we conclude that in the case of a dynamic boundary term the ``strange term'' is given by a \textit{nonlocal memory term} (even if $\lambda =0$). We recall that the comparison principle is not always satisfied in the presence of general nonlocal memory terms.

It is surprising that, when $\alpha = 0$ and $\beta > 0$ the limit obtained in \Cref{thm:main} becomes an elliptic linear Dirichlet boundary value problem depending of the time (as parameter) and with a linear but nonlocal reaction term:
\begin{equation*}
\begin{dcases}
	-\Delta u  + (n-2)C_0^{n-2}\omega_n 
	\left( \uo(x,t)-\frac{1}{\beta}\int\limits_{0}^{t}\left( \frac{n-2}{C_{0}}\uo(x,s)+g(x,s)\right) e^{-\frac{\lambda +\frac{n-2}{C_{0}}}{ \beta}(t-s)}ds \right) \\
	\qquad = f(x,t)   \textrm{ in }\Omega\times(0,T), \\
	u = 0 \textrm{ on } \partial \Omega \times (0,T), 
\end{dcases}
\end{equation*}

The proof of the main result is presented in the next section which we structured by means of several subsections. The last subsection contains the proof of the comparison principle for the parabolic homogenised system (which, in particular, implies the uniqueness of solutions).


\section{Proof of \Cref{thm:main}} 

 The proof applies Tartar's method of oscillating functions that has been successful in the past for the critical case (see, e.g. \cite{Tartar,DGPS}), but introducing some new ideas to deal with dynamical boundary conditions.
 
 \subsection{Existence, uniqueness and convergence of solutions of problem \eqref{eq:1}}

A weak solution of the problem \eqref{eq:1} is defined as a function 
\begin{equation*}
	u_{\ee}\in L^{2}(0,T; H^{1}(\Omega_{\ee},\pa\Omega)),	
	\quad \alpha \pa_{t}u_{\ee}\in L^{2}(0,T; {H^{-1}(\Omega_{\ee})}) \quad \textrm{and} \quad  \beta \pa_{t}u_{\ee}\in L^{2}(0,T; {H^{-1/2}(S_{\ee})}) 	
\end{equation*}
such that $\alpha u(x,0) = 0$ on $\Omega_\ee$ and $\beta  u(x,0) = 0$ on $S_\ee$ 
satisfying the integral identity
\begin{align} 
 \alpha\int\limits_{0}^{T}&\langle \pa_{t}u_{\ee},\phi \rangle _{\Omega_{\ee}}dt
+ \beta{\ee}^{-\gamma}\int\limits_{0}^{T}\langle\pa_{t}u_{\ee},\phi\rangle_{S_{\ee}}\, dt+
\int\limits_{Q_{\ee}^{T}}\nabla{u_{\ee}}\nabla{\phi}dx{dt}+\lambda{\ee}^{-\gamma}\int\limits_{S_{\ee}^{T}}u_{\ee}{\phi}ds\,dt=\nonumber \\
\label{eq:2}
&={\ee}^{-\gamma}\int\limits_{S_{\ee}^{T}}g(x,t)\phi(x,t)ds\,dt+\int\limits_{Q_{\ee}^{T}}f\phi \, dx\,{dt},
\end{align}
where $\phi$ is an arbitrary function from $L^{2}(0,T; H^{1}(\Omega_{\ee},\pa\Omega))$, $\langle \cdot,\cdot \rangle _{\Omega_{\ee}}$ denotes the duality product between $H^{-1}(\Omega_{\ee},\pa\Omega)$ and $H^{1}(\Omega_{\ee},\pa\Omega)$ and $\langle \cdot, \cdot \rangle_{S_{\ee}}$ denotes the duality product between $H^{-1/2}(S_{\ee})$ and $H^{1/2}(S_{\ee})$.
The space $H^{1}(\Omega_{\ee},\pa\Omega)$ is defined as the closure in $H^{1}(\Omega_{\ee})$ of the space of functions infinitely differentiable in $\overline{\Omega_{\ee}}$ and vanishing in a neighbourhood of the boundary $\pa\Omega$. 

\begin{remark}
	We recall that initial data are given in $\Omega_{\ee}$ if $\alpha > 0$ and on $S_\ee$ if $\beta > 0$. The problem has a semigroup solution even if the initial data on $S_\ee$ is not the trace of the data in $\Omega_{\ee}$. However, when this properties hold, solutions are smoother.
\end{remark} 

The existence and uniqueness of solutions to problem \eqref{eq:1} is consequence of well-known results (see, e.g. Esher \cite{Escher}). It is also possible to apply the theory of monotone operators (see \cite{BeDiVr}) or Galerkin's approximation arguments (see \cite{Angulano,Jager11}).
We recall that the above mentioned references show a greater regularity on the time derivative. Thus, by using the time derivatives of $u$ and of its trace as test functions we arrive to the following result, the proof of which is an easy consequence of the above mentioned results:
\begin{theorem}
Problem \eqref{eq:1} has a unique weak solution $u_{\ee}$ and the following estimate holds
\begin{align}
	\| u_\ee \|_{H^1 (Q^T_\ee)} \le K,
\label{eq:3}
\end{align}
where $K$ here and below is a positive constant that does not depend on ${\ee}$.
\end{theorem}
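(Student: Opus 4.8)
The plan is to establish the a priori estimate \eqref{eq:3} by the standard energy method, testing the weak formulation with $u_\ee$ itself and then with $\pa_t u_\ee$, and carefully tracking how the critical factor ${\ee}^{-\gamma}$ on the boundary terms combines with the surface measure of $S_\ee$. First I would recall the scaling: since $|\Upsilon_\ee| \cong d\,{\ee}^{-n}$ and each particle $G^j_\ee$ has radius $a_\ee = C_0 {\ee}^\gamma$ with $\gamma = \tfrac{n}{n-2}$, the total surface area satisfies $|S_\ee| \cong {\ee}^{-n} a_\ee^{n-1} = {\ee}^{-n} {\ee}^{\gamma(n-1)}$, and a short computation with $\gamma(n-1) - n = \gamma - n + \gamma(n-2) - n + \dots$ shows in fact ${\ee}^{-\gamma}|S_\ee|$ stays bounded — this is precisely why $\gamma$ is the \emph{critical} exponent and why the boundary integrals do not blow up. I would also need a trace/Poincaré-type inequality on $\Omega_\ee$, uniform in $\ee$, to control the boundary $L^2$ norms by $\|u_\ee\|_{H^1(\Omega_\ee)}$; the relevant constant degenerates at exactly the critical rate, compensated by ${\ee}^{-\gamma}$.

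The first energy estimate: take $\phi = u_\ee$ in \eqref{eq:2} (more precisely, integrate from $0$ to $\tau$ and use the initial conditions). The two dynamic terms give $\tfrac{\alpha}{2}\|u_\ee(\tau)\|_{L^2(\Omega_\ee)}^2 + \tfrac{\beta}{2}{\ee}^{-\gamma}\|u_\ee(\tau)\|_{L^2(S_\ee)}^2$, the diffusion term gives $\int_0^\tau\|\nabla u_\ee\|_{L^2(\Omega_\ee)}^2$, and the Robin term contributes $\lambda {\ee}^{-\gamma}\int_0^\tau\|u_\ee\|_{L^2(S_\ee)}^2 \ge 0$. On the right-hand side, $\int f u_\ee$ is handled by Cauchy–Schwarz and Young (absorbing $\|u_\ee\|_{L^2}^2$ via Poincaré in $H^1(\Omega_\ee,\pa\Omega)$, where the constant is uniform because of the Dirichlet condition on $\pa\Omega$), and ${\ee}^{-\gamma}\int_{S_\ee^T} g u_\ee$ is bounded using ${\ee}^{-\gamma}\int_{S_\ee}|g||u_\ee| \le C{\ee}^{-\gamma}\|g\|_{L^\infty}|S_\ee|^{1/2}\big({\ee}^{-\gamma}\|u_\ee\|_{L^2(S_\ee)}^2\big)^{1/2}$... actually cleaner: bound it by the uniform trace inequality $\|u_\ee\|_{L^2(S_\ee)}^2 \le C{\ee}^{\gamma}\|u_\ee\|_{H^1(\Omega_\ee)}^2$ so that ${\ee}^{-\gamma}\|u_\ee\|_{L^2(S_\ee)} \le C\|u_\ee\|_{H^1(\Omega_\ee)}$, and then Young. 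Gronwall then yields $\|u_\ee\|_{L^\infty(0,T;L^2(\Omega_\ee))} + \|u_\ee\|_{L^2(0,T;H^1(\Omega_\ee))} \le K$, together with ${\ee}^{-\gamma/2}\|u_\ee\|_{L^2(S_\ee^T)}\le K$.

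The second estimate, needed for the $\pa_t u_\ee \in L^2$ part of \eqref{eq:3}: differentiate the problem formally in $t$ (justified by the regularity theory for dynamic boundary conditions cited, e.g.\ Escher \cite{Escher}) and test with $\pa_t u_\ee$, or equivalently use $\phi = \pa_t u_\ee$ directly when $\pa_t u_\ee$ has enough regularity. This produces $\alpha\int_0^\tau\|\pa_t u_\ee\|_{L^2(\Omega_\ee)}^2 + \beta{\ee}^{-\gamma}\int_0^\tau\|\pa_t u_\ee\|_{L^2(S_\ee)}^2 + \tfrac12\|\nabla u_\ee(\tau)\|_{L^2(\Omega_\ee)}^2 + \tfrac{\lambda}{2}{\ee}^{-\gamma}\|u_\ee(\tau)\|_{L^2(S_\ee)}^2$ on the left. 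Here the two hypotheses split: under (F$_1$), $\alpha>0$ absorbs $\int f \pa_t u_\ee$ directly (with $f\in L^2$) after using the first estimate; under (F$_2$), $\beta>0$ lets one instead integrate by parts in time, $\int_0^\tau\!\int f\pa_t u_\ee = [\int f u_\ee] - \int_0^\tau\!\int \pa_t f\, u_\ee$, needing only $f\in H^1(0,T;L^2)$ and the first estimate. The boundary data term ${\ee}^{-\gamma}\int g\,\pa_t u_\ee$ is treated the same way, integrating by parts in $t$ and using $g\in\mathcal C^1(\overline{Q^T})$ together with the uniform trace bound. The main obstacle is making the trace inequality $\|v\|_{L^2(S_\ee)}^2 \le C{\ee}^{\gamma}\|v\|_{H^1(\Omega_\ee)}^2$ genuinely uniform in $\ee$: one proves it cell by cell, rescaling $Y^j_\ee$ to the unit cell containing a ball of radius $a_\ee/\ee = C_0{\ee}^{\gamma-1}$, applying a standard trace estimate there, and summing — the exponent bookkeeping ($\gamma-1 = \tfrac{2}{n-2}$, and the interaction with the $\ee$-dilation of the cell) is exactly where the criticality of $\gamma$ is used, and it is the delicate point; everything else is routine energy estimates plus Gronwall.
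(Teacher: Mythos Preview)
Your proposal is correct and follows essentially the same approach that the paper indicates: the paper does not actually give a proof, but simply cites well-known existence/uniqueness results (Escher, monotone operators, Galerkin) and states that the uniform estimate follows ``by using the time derivatives of $u$ and of its trace as test functions''. Your plan---test first with $u_\ee$, then with $\partial_t u_\ee$, splitting the two cases (F$_1$)/(F$_2$), and relying on the uniform trace inequality $\|v\|_{L^2(S_\ee)}^2 \le C\ee^\gamma \|v\|_{H^1(\Omega_\ee)}^2$---is exactly the detailed version of what the paper leaves implicit, so there is nothing to compare.
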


\begin{remark}
	Notice that, when $\alpha = 0$, we require greater regularity of $f$ in order to work more easily with $\frac{\partial u}{\partial t}$. We guess that this technical assumption could be improved by suitable approximation arguments but we shall not enter into the details here. 
\end{remark}

\subsection{Extension  and existence of a limit} 
There exists a uniformly bounded family of extension operators 
\begin{eqnarray*}
	P_{\ee} : H^1 (Q_\ee^T) &\longrightarrow & H^1(Q^T).
\end{eqnarray*}
which, furthermore, preserve the boundary conditions 
\begin{eqnarray*}
	P_{\ee} : H^1 (Q_\ee^T, \Gamma^T) &\longrightarrow & H^1(Q^T, \Gamma^T).	
\end{eqnarray*}
where $\Gamma^T = \big(\partial \Omega \times (0,T) \big) \cup \big(\Omega \times \{0\} \big)$. 
See, e.g., \cite{Ci-Saint,OS95}. 
Hence
\begin{align}
	\Vert\Pee{u_{\ee}}\Vert_{H^{1}(Q^{T})}&\le {C} \Vert{u_{\ee}}\Vert_{H^{1}(Q^{T}_{\ee})},
		\label{eq:23} 
\end{align}

Estimate \eqref{eq:23} implies that there exists a subsequence (we preserve for it the notation of the original sequence) such that, as ${\ee}\to 0$, we have \eqref{eq:convergence}.
	
\subsection{Constructing a functional inequality.}
Due to the weak formulation of \eqref{eq:1} 
and using the monotonicity of the involved vectorial operator, as in \cite{DGPS}, we can use a very weak formulation of the problem leading to the new inequality
\begin{align}
 \alpha \int\limits_{0}^{T}\int\limits_{\Omega_{\ee}}\pa_{t}\phi(\phi-u_{\ee})dx{dt}&+ \beta {\ee}^{-\gamma}\int\limits_{0}^{T}\int\limits_{S_{\ee}}\pa_{t}\phi(\phi-u_{\ee})ds{dt}+\nonumber \\
&\qquad +\int\limits_{0}^{T}\int\limits_{\Omega_{\ee}}\nabla\phi\nabla(\phi-u_{\ee})dx{dt}+{\ee}^{-\gamma}\int\limits_{0}^{T}\int\limits_{S_{\ee}}\lambda\phi(\phi-u_{\ee})ds{dt}
\label{eq:29}\ \\
&\ge {\ee}^{-\gamma}\int\limits_{0}^{T}\int\limits_{S_{\ee}}{g(x,t)}(\phi-u_{\ee})ds{dt}+\int\limits_{0}^{T}\int\limits_{\Omega_{\ee}}f(\phi-u_{\ee})dx{dt}-\nonumber\\
&\qquad -\frac{\alpha}{2}\Vert{\phi(x,0)}\Vert^{2}_{L^{2}(\Omega_{\ee})}
	-\frac{\beta}{2}{\ee}^{-\gamma}\Vert{\phi(x,0)}\Vert^{2}_{L^{2}(S_{\ee})},\nonumber
\end{align}
where $\phi(x,t)=\psi(x)\eta(t)$, $\psi\in H^{1}_{0}(\Omega)$, $\eta\in C^{1}[0,T]$.

\subsection{Selection of the oscillating test function: spatial component} 
We will select an oscillating test function $\phi_\ee = \phi - W_\ee (x) H (\phi)$. Function $W_\ee$ is our usual choice that allows to change the study of boundary integrals over $G_\ee$ to a union of \emph{large} balls
\begin{equation*}
	T_\ee = \bigcup_{j \in \Upsilon_{\ee}} T_{\ee/4}^j
\end{equation*}
where $T_{\ee/4}^j$ is the ball of radius $\ee/4$ centered at $\ee j$.
We introduce the function $w^{j}_{\ee}(x)$ as a solution of the following problem
\begin{equation}
	\begin{dcases} 
	\Delta{w^{j}_{\ee}}=0, & x\in T^{j}_{{\ee}/4}\setminus \overline{G^{j}_{\ee}},\\
	 w^{j}_{\ee}=1, & x\in \pa{G^{j}_{\ee}},\\
	 w^{j}_{\ee}=0, &x\in \pa{T^{j}_{{\ee}/4}}.
	\end{dcases}
	\label{eq:32}\ 
\end{equation}
For a ball it is known that
\begin{equation}
	w^{j}_{\ee}(x)=\frac{|x|^{2-n}-(\frac{{\ee}}{4})^{2-n}}{a_{\ee}^{2-n}-(\frac{\ee}{4})^{2-n}}
	\label{eq:41}\ 
\end{equation}
is the explicit solution. We set
$$
	W_{\ee}(x)=
	\begin{dcases}
	w^{j}_{\ee}(x), & x\in T^{j}_{{\ee}/4}\setminus\overline{G^{j}_{\ee}},\,j\in \Upsilon_{\ee},\\
	1, & x\in G^{j}_{\ee},\,j\in \Upsilon_{\ee},\\
	0, & x\in \Omega\setminus \overline T_\ee .\\
	\end{dcases}
$$
It is easy to see that $W_{\ee}\in H^{1}_{0}(\Omega)$ and, as ${\ee}\to 0$,
\begin{equation}
	W_{\ee}\rightharpoonup 0\,\,\,\mbox{weakly in }\,\,H^{1}_{0}(\Omega).
	\label{eq:33}\ 
\end{equation}
\subsection{Selection of the oscillating test function: time component} For an arbitrary function $\eta(t)\in C^{1}[0,T]$ and $\psi \in H_0^1 (\Omega)$, let us introduce functions $H_{\ee}^j(t)$, $(j\in \Upsilon_{\ee})$ as a solution
of the following Cauchy problem
\begin{equation}
\begin{dcases}
 \beta \frac{dH_\ee^j}{dt} +\frac{n-2}{C_{0}}H_\ee^j-\lambda\Big(\psi(P_\ee^j)\eta(t)-H_\ee^j\Big)= \beta \psi (P_\ee^j)\frac{d\eta}{dt}-g(P^{j}_{\ee},t),\\
\beta H_\ee^j(0)= \beta \psi(P_\ee^j) \eta(0).\\
\end{dcases}
\label{eq:30}\ 
\end{equation}
The choice of problem may appear arbitrary, but it is precisely so that \eqref{eq:reason for appearance of the strange term} vanishes. Notice that, in particular, 
\begin{equation}
H_\ee^j (t) = H_{\psi \eta} (P_\ee^j, t) 
\end{equation}
where, for $\phi$ smooth, $H_{\phi}$ is
the unique solution of
\begin{equation*}
\begin{dcases}
 \beta \frac{\partial H_{\phi}}{\partial t} +\frac{n-2}{C_{0}}H_{\phi}-\lambda\Big(\phi-H_{\phi}\Big)= \beta \frac{\partial \phi}{\partial t}-g & Q^T, \\
	\beta H_{\phi}(x,0)= \beta \phi(x,0) & \Omega.\\
\end{dcases}
\end{equation*}
When $\beta > 0$, the solution of this problem is given explicitly by
\begin{gather}
H_{\phi}(x,t)=\phi(x,t)-\frac{n-2}{ \beta C_{0}}\int\limits_{0}^{t}e^{-\frac{\lambda+\frac{n-2}{C_{0}}}{ \beta}(t-s)} \Big (\phi(x,s) + g(x,s) \Big) ds.
\label{eq:46}\ 
\end{gather} 
When $\beta = 0$ we can solve directly to obtain $H_\phi = (\lambda \phi - g) /( \frac{n-2}{C_0} + \lambda )$.
Also, we have that
\begin{equation*}
{\beta}\frac{d}{dt} \| H_\phi (t) \|_{L^2 (\Omega)}^2 + \left( \lambda + \frac{n-2}{C_0}  \right) \| H_\phi (t) \|_{L^2 (\Omega)}^2 \le  \left( \| \phi (t) \|_{L^2 (\Omega)} + \left\| \frac{ \partial \phi }{\partial t} (t) \right\| + \| g \|_{L^2 (\Omega)}  \right) \| H_\phi (t) \|_{L^2 (\Omega)}
\end{equation*}
Hence
\begin{equation}
\label{eq:estimate for H phi}
\| H_\phi \|_{L^2 (Q^T)} \le {C} \left( \| \phi(\cdot, 0) \|_{L^2 (\Omega)} + \| \phi\|_{L^2 (Q^T)} + \left\| \frac{ \partial \phi }{\partial t} \right\|_{L^2 (Q^T)} + \| g \|_{L^2 (Q^T)} \right) .
\end{equation}

\subsection{The oscillating test function in space-time} 
Let us define the function
\begin{equation}
	w_{\ee}(\psi\eta)=
	\begin{dcases}
	w^{j}_{\ee}(x)H_\ee^j(x,t), & x\in T^{j}_{{\ee}/4}\setminus \overline{G^{j}_{\ee}},j\in \Upsilon_{\ee},t\in [0,T],\\
	0, & x\in \Omega\setminus\overline{T_\ee}, t\in [0,T].\\
	\end{dcases}
	\label{eq:34}\ 
\end{equation}

We have $w_{\ee}(\psi\eta)\in H^{1}(Q^{T}_{\ee})$ and if we denote by $\Pee{w_{\ee}(\psi\eta)}$ the $H^{1}$ - extension on $Q^{T}$ of the function $w_{\ee}(\psi\eta)$,
satisfying the estimates similar to \cref{eq:23}, we obtain
using \eqref{eq:33} as ${\ee}\to 0$
\begin{gather}
\Pee{w_{\ee}(\psi\eta)}\rightharpoonup 0\,\,\,\mbox{weakly in}\,\,\,L^{2}(0,T;H^{1}_{0}(\Omega)),
\label{eq:35}\ \\
\Pee{w_{\ee}(\psi\eta)}\to 0,\,\,\pa_{t}\Pee{w_{\ee}(\psi\eta)}\to 0\,\,\,\mbox{strongly in}\,\,\,L^{2}(Q^{T}).\label{eq:36}\ 
\end{gather}
Let us take as a test function in the inequality \eqref{eq:29} $\phi(x,t)=\psi(x)\eta(t)-{w_{\ee}(\psi\eta)}$, where $\psi\in C^{\infty}_{0}(\Omega)$, $\eta\in C^{1}[0,T]$. We get
\begin{align}
\int\limits_{0}^{T}&\int\limits_{\Omega_{\ee}} \alpha( \psi(x)\frac{d \eta}{dt}(t)-\pa_{t}w_{\ee}(\psi\eta))(\psi(x)\eta(t)-w_{\ee}(\psi\eta)-u_{\ee})dx{dt}+\nonumber\\
&\qquad +{\ee}^{-\gamma}\sum\limits_{j\in \Upsilon_{\ee}}\int\limits_{0}^{T}\int\limits_{\pa{G^{j}_{\ee}}} \beta(\psi(x)\frac{d \eta}{dt}(t)-\frac{d H_\ee^j}{dt}(t))(\psi(x)\eta(t)-H_\ee^j(t)-u_{\ee})ds{dt}+\nonumber\\
&\qquad+{\ee}^{-\gamma}\sum\limits_{j\in \Upsilon_{\ee}}\int\limits_{0}^{T}\int\limits_{\pa{G^{j}_{\ee}}}\lambda(\psi(x)\eta(t)-H_\ee^j(t))(\psi(x)\eta(t)-H_\ee^j(t)-u_{\ee})ds{dt}+\nonumber\\
&\qquad+ \int\limits_{0}^{T}\int\limits_{\Omega_{\ee}}(\eta\nabla\psi(x)-\nabla{w_{\ee}(\psi\eta)})(\eta(t)\nabla\psi(x)-\nabla{w_{\ee}(\psi\eta)}-\nabla{u_{\ee}})dx{dt}\ge\nonumber\\
&\ge \int\limits_{0}^{T}\int\limits_{\Omega_{\ee}}f(x,t)(\psi(x)\eta(t)-w_{\ee}(\psi\eta))dx{dt}+\nonumber\\
&\qquad+{\ee}^{-\gamma}\sum\limits_{j\in \Upsilon_{\ee}}\int\limits_{0}^{T}\int\limits_{\pa{G^{j}_{\ee}}}{g(x,t)}(\psi(x)\eta(t)-H_\ee^j(t)-u_{\ee})ds{dt}
\label{eq:37}\\
&\qquad-\frac{\alpha}{2}\Vert\psi(x)\eta(0)-{w_{\ee}(\psi\eta)}|_{t=0}\Vert^{2}_{L^{2}(\Omega_{\ee})}
-\frac{\beta}{2}{\ee}^{-\gamma}\Vert\psi(x)\eta(0)-{w_{\ee}(\psi\eta)}|_{t=0}\Vert^{2}_{L^{2}(S_{\ee})}.\nonumber
\end{align}

Taking into account \eqref{eq:35}, \eqref{eq:36}, we conclude
\begin{align}
\lim\limits_{{\ee}\to 0}&\int_{Q^{T}_{{\ee}}}(\psi(x)\frac{d \eta}{dt}(t)-\pa_{t}w_{\ee}(\psi\eta))(\psi\eta-w_{\ee}(\psi\eta)-u_{\ee})dx{dt} \nonumber \\
&=\int\limits_{Q^{T}}\psi\frac{d \eta}{dt}(t)(\psi\eta-\uo)dx{dt},
\label{eq:38}  \\
\lim\limits_{{\ee}\to 0}&\int\limits_{Q^{T}_{\ee}}\nabla(\psi(x)\eta(t))(\nabla(\psi(x)\eta(t))-\nabla{w_{\ee}(\psi\eta)}-\nabla{u_{\ee}})dx{dt} \nonumber \\
&=\int\limits_{Q^{T}}\nabla(\psi(x)\eta(t))\nabla(\psi(x)\eta(t)-\uo)dx{dt}.
\label{eq:39} 
\end{align}
On the other hand, we have
\begin{align}
-\int\limits_{0}^{T}&\int\limits_{\Omega_{\ee}}\nabla{w_{\ee}(\psi\eta)}(\nabla(\psi(x)\eta(t))-\nabla{w_{\ee}(\psi\eta)}-\nabla{u_{\ee}})dx{dt}= \nonumber \\
&=-\sum\limits_{j\in \Upsilon_{\ee}}\int\limits_{0}^{T}\int\limits_{T^{j}_{{\ee}/4}\setminus\overline{G^{j}_{\ee}}}\nabla{w}^{j}_{\ee}\nabla(H_\ee^j(t)(\psi(x)\eta(t)-w^{j}_{\ee}(x)H_\ee^j(t)-u_{\ee}))dx{dt}= \nonumber \\
&=-\sum\limits_{j\in \Upsilon_{\ee}}\int\limits_{0}^{T}\int\limits_{\pa{T^{j}_{{\ee}/4}}}\pa_{\nu}w^{j}_{\ee}H_\ee^j(t)(\psi(x)\eta(t)-u_{\ee})ds{dt}-
\label{eq:40}\  \\
&\qquad -\sum\limits_{j\in \Upsilon_{\ee}}\int\limits_{0}^{T}\int\limits_{\pa{G^{j}_{\ee}}}\pa_{\nu}w^{j}_{\ee}H_\ee^j(t)(\psi(x)\eta(t)-H_\ee^j(t)-u_{\ee})ds{dt}. \nonumber
\end{align}
It is easy to see that
\begin{gather}
\pa_{\nu}w^{j}_{\ee}\Bigl|_{\pa{T^{j}_{{\ee}/4}}}=\frac{(2-n)C_{0}^{n-2}4^{n-1}{\ee}}{1-\alpha_{\ee}};\,\,\,
\pa_{\nu}w^{j}_{\ee}\Bigl|_{\pa{G^{j}_{\ee}}}=-\frac{(n-2)C_{0}^{-1}{\ee}^{-\gamma}}{1-\alpha_{\ee}},
\label{eq:42}\ 
\end{gather}
where $\alpha_{\ee}\to 0$ as ${\ee}\to 0$. Considering the integrals over $S_{\ee}\times (0,T)$. Using \eqref{eq:37}, \eqref{eq:40}-\eqref{eq:42} we obtain
\begin{align}
& \beta {\ee}^{-\gamma}\sum\limits_{j\in \Upsilon_{\ee}}\int\limits_{0}^{T}\int\limits_{\pa{G^{j}_{\ee}}}\left(\psi(x)\frac{d \eta}{dt}(t)-\frac{d H_\ee^j}{dt}(t)\right)(\psi\eta-H_\ee^j(t)-u_{\ee})ds{dt}+ \nonumber \\
&\qquad +\lambda{\ee}^{-\gamma}\sum\limits_{j\in \Upsilon_{\ee}}\int\limits_{0}^{T}\int\limits_{\pa{G^{j}_{\ee}}}(\psi(x)\eta(t)-H_\ee^j(t))(\psi(x)\eta(t)-H_\ee^j(t)-u_{\ee})ds{dt}-\nonumber \\
&\qquad -\frac{(n-2){\ee}^{-\gamma}}{C_{0}(1-\alpha_{\ee})}\sum\limits_{j\in \Upsilon_{\ee}}\int\limits_{0}^{T}\int\limits_{\pa{G^{j}_{\ee}}}H_\ee^j(t)(\psi(x)\eta(t)-H_\ee^j(t)-u_{\ee})ds{dt} \nonumber \\
&\qquad -{\ee}^{-\gamma}\sum\limits_{j\in \Upsilon_{\ee}}\int\limits_{0}^{T}\int\limits_{\pa{G^{j}_{\ee}}}{g(x,t)}(\psi(x)\eta(t)-H_\ee^j(t)-u_{\ee})ds{dt}
\label{eq:43}\  \\
&=\gamma_{\ee} + {\ee}^{-\gamma}\sum\limits_{j\in \Upsilon_{\ee}}\int\limits_{0}^{T}\int\limits_{\pa{G^{j}_{\ee}}} \left\{ \beta \psi(P^{j}_{\ee})\frac{d \eta}{dt}(t)- \beta \frac{d H_\ee^j}{dt}(t)+\lambda\Big(\psi(P^{j}_{\ee})\eta(t) - H_\ee^j\Big) -\frac{n-2}{C_{0}}H_\ee^j(t)-g(P^{j}_{\ee},t)\right\}\times 
\nonumber \\
&\qquad \qquad \times(\psi(x)\eta(t)-H_\ee^j(t)-u_{\ee})ds{dt}, \label{eq:reason for appearance of the strange term}
\end{align}
where $\gamma_{\ee}\to 0$ as ${\ee}\to 0$.\\

So, in conclusion, with this choice of test function
the sum of all integrals above over the boundary $S_{\ee}\times(0,T)$ tends to zero.\\

\subsection{Deduction of the effective reaction term}
From \eqref{eq:37}-\eqref{eq:43} we conclude that the function $\uo$ satisfies the integral inequality
\begin{gather*}
 \alpha \int\limits_{Q^{T}}\psi(x)\frac{d \eta}{dt}(t)(\psi(x)\eta(t)-\uo)dx{dt}+\int\limits_{Q^{T}}\nabla(\psi(x)\eta(t))\nabla(\psi(x)\eta(t)-\uo)dx{dt}-\\
-\lim\limits_{{\ee}\to 0}\sum\limits_{j\in \Upsilon_{\ee}}\int\limits_{0}^{T}\int\limits_{\pa{T^{j}_{{\ee}/4}}}\pa_{\nu}w^{j}_{\ee}H_\ee^j(t)(\psi(x)\eta(t)-u_{\ee})ds{dt}\ge\\
\ge \int\limits_{Q^{T}}f(\psi(x)\eta(t)-\uo)dx{dt}
-\frac{\alpha}{2}\Vert{\psi(x)\eta(0)}\Vert^{2}_{L^{2}(\Omega)}.
\label{eq:44}
\end{gather*}

Applying Lemma 1 from \cite{ZSh}, we deduce
\begin{gather}
\lim\limits_{{\ee}\to 0}4^{n-1}{\ee}\sum\limits_{j\in \Upsilon_{\ee}}\int\limits_{0}^{T}\int\limits_{\pa{T^{j}_{{\ee}/4}}}H_\ee^j(t)(\psi(x)\eta(t)-u_{\ee})ds{dt}= \nonumber \\
=\omega_{n}\int\limits_{0}^{T}\int\limits_{\Omega}H_{\psi\eta}(x,t)(\psi(x)\eta(t)-\uo)dx{dt},
\label{eq:45}
\end{gather} 
where $\omega_{n}$ is the area of the unit sphere in $\mathbb{R}^{n}$.
\\

\subsection{Homogenised equation for $\uo$}
Thus, we have the following integral inequality for $\uo$
\begin{align} 
	{\alpha}\int\limits_{Q^{T}}&\psi(x)\frac{d \eta}{dt}(t)(\psi(x)\eta(t)-\uo)dx{dt}+\int\limits_{Q^{T}}\nabla_{x}(\psi(x)\eta(t))\nabla(\psi(x)\eta(t)-\uo)dx{dt}+\\
&\qquad +(n-2)C_{0}^{n-2}\omega_{n}\int\limits_{Q^{T}}H_{\psi\eta}(x,t)(\psi(x)\eta(t)-\uo)dx{dt}\ge
\label{eq:47}\  \\
&\ge \int\limits_{Q^{T}}f(\psi(x)\eta(t)-\uo)dx{dt}
-\frac{\alpha}{2}
\Vert\psi(x)\eta(0)\Vert^{2}_{L^{2}(\Omega)}.
\end{align}

Taking into account that the linear span of functions $\{\psi(x)\eta(t):\psi\in C^{\infty}_{0}(\Omega), \eta\in C^{1}[0,T]\}$ are dense in the space
$$V=\left \{u\in L^{2}(0,T; H^{1}_{0}(\Omega)) \cap \mathcal C([0,T]; L^2 (\Omega) ): \pa_{t}u\in L^{2}(0,T;H^{-1}(\Omega)) \right \},$$ 
we deduce that
\begin{align} 
	{\alpha}\int\limits_{Q^{T}}&\frac{\partial \phi}{\partial t}(\phi-\uo)dx{dt}+\int\limits_{Q^{T}}\nabla_{x}\phi \nabla(\phi-\uo)dx{dt}\\
&\qquad +(n-2)C_{0}^{n-2}\omega_{n}\int\limits_{Q^{T}}H_{\phi}(x,t)(\phi-\uo)dx{dt}\ge
\\
&\ge \int\limits_{Q^{T}}f(\psi(x)\eta(t)-\uo)dx{dt}
-\frac{\alpha}{2}\Vert\psi(x)\eta(0)\Vert^{2}_{L^{2}(\Omega)},
\end{align}
for any function
$\phi\in V$. Using $\phi=\uo+\tau \varphi$ where $\varphi \in V$, we can pass to a limit as $\tau\to 0^+$ and $\tau\to 0^-$.
Due to \cref{eq:46} for $\beta > 0$ and solving \cref{eq:defn H when beta 0} explicitly when $\beta = 0$, we deduce that 
\begin{equation}
		H_{\uo + \tau \varphi} \longrightarrow H_{\uo} \qquad \textrm{ in } L^2 (Q^T)\textrm{ as } \tau \to 0.
\end{equation}
\normalcolor
We
conclude that $\uo$ is satisfying the integral identity
\begin{gather} 
	{\alpha}\int\limits_{0}^{T}\langle \pa_{t}\uo,\varphi \rangle dt+\int\limits_{Q^{T}}\nabla{\uo}\nabla{\varphi}dx{dt}
+(n-2)C_{0}^{n-2}\omega_{n}\int\limits_{Q^{T}}H_{\uo}(x,t)\varphi{dx}dt=\int\limits_{Q^{T}}f\varphi{dx}dt.
\label{eq:48}
\end{gather}
Hence, $\uo$ is a weak solution of the problem \eqref{eq:28}. 

\subsection{Comparison principle of the limit problem}
\label{sec:uniqueness of the limit problem}

Problem \eqref{eq:28} is by no means standard. However, some systems keeping several similar features was considered in the literature: see, e.g.\ \cite{DiSt} and \cite{Chipot}. 
We prove uniqueness using the change-of-variable formulation \eqref{eq:28b}. 
\begin{lemma}
	Assume that $f, g \le 0$ and let $u,v$ be a solution of \eqref{eq:28b}. Then $u,v \le 0$.
\end{lemma}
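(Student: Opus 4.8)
The plan is to run a weighted positive-part energy estimate on the coupled system \eqref{eq:28b}, choosing the weights so that the coupling terms can be absorbed for every admissible pair $\alpha,\beta\ge 0$. Write $c=(n-2)C_0^{n-2}\omega_n>0$ and $\kappa=\frac{n-2}{C_0}>0$, and set $X(t)=\|u^+(t)\|_{L^2(\Omega)}^2$, $Y(t)=\|v^+(t)\|_{L^2(\Omega)}^2$, where $u^+=\max\{u,0\}$ and $v^+=\max\{v,0\}$. Since $u=0$ on $\partial\Omega$ we have $u^+\in L^2(0,T;H^1_0(\Omega))$, and $v$ (which solves a pointwise-in-$x$ linear ODE with $L^2$ source when $\beta>0$, and is algebraic in $u,g$ when $\beta=0$) has enough time regularity to be tested against $v^+$. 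The initial conditions $\alpha u(\cdot,0)=0$ and $\beta v(\cdot,0)=0$ give $\alpha X(0)=0$ and $\beta Y(0)=0$, whichever of $\alpha,\beta$ is positive.

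First I would test the first equation of \eqref{eq:28b} with $u^+$, using $\int_\Omega\nabla u\cdot\nabla u^+=\|\nabla u^+\|_{L^2(\Omega)}^2$, $\langle\partial_t u,u^+\rangle=\tfrac12 X'$ (the standard in-time absolute continuity identity, valid since $u\in L^2(0,T;H^1_0)$ with $\partial_t u\in L^2(0,T;H^{-1})$; this term is simply absent when $\alpha=0$, in which case the first identity is required only for a.e.\ $t$), $\int_\Omega u u^+=X$, $\int_\Omega v u^+\le\int_\Omega v^+u^+$, and $\int_\Omega f u^+\le 0$, to obtain $\tfrac{\alpha}{2}X'+\|\nabla u^+\|^2+cX\le c\int_\Omega v^+u^+$. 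Testing the second equation with $v^+$ (or multiplying by $v^+$ when $\beta=0$) and using $\int_\Omega gv^+\le 0$, $\int_\Omega u v^+\le\int_\Omega u^+v^+$, gives $\tfrac{\beta}{2}Y'+(\kappa+\lambda)Y\le\kappa\int_\Omega u^+v^+$.

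Next I would form the weighted sum $\kappa\times(\text{first inequality})+c\times(\text{second inequality})$: the two coupling terms merge into $2c\kappa\int_\Omega u^+v^+$, which Young's inequality bounds by $2c\kappa\ee X+\tfrac{c\kappa}{2\ee}Y$. Because $\lambda>0$, the interval $\big(\tfrac{\kappa}{2(\kappa+\lambda)},\tfrac12\big)$ is nonempty, and for $\ee$ in it the two resulting coefficients $c_1=\kappa c(1-2\ee)$ and $c_2=c\big(\kappa+\lambda-\tfrac{\kappa}{2\ee}\big)$ are both strictly positive, yielding
\[
	\frac{d}{dt}\Big(\tfrac{\kappa\alpha}{2}X+\tfrac{c\beta}{2}Y\Big)+\kappa\|\nabla u^+\|^2+c_1X+c_2Y\le 0 .
\]
Integrating in time from $0$ to $t$ and using $\alpha X(0)=\beta Y(0)=0$, the right-hand side is $0$ while every term on the left is nonnegative; hence all vanish, in particular $\int_0^T(c_1X+c_2Y)\,dt=0$, which forces $X=Y=0$ a.e., i.e.\ $u\le 0$ and $v\le 0$ a.e.\ in $Q^T$.

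The only genuinely delicate point is the choice of weights. The naive symmetric combination (weights $1$ and $1$) closes only under $(c-\kappa)^2<4c\lambda$, i.e.\ for $\lambda$ large; one must instead exploit the scaling freedom and pick weights $p,q$ with $pc=q\kappa$, reducing the closure condition to the always-true $0<4\kappa\lambda$ (this is why I take $p=\kappa$, $q=c$ above). Everything else — the positive-part manipulations, the time-derivative identity, and the degenerate cases $\alpha=0$ (first equation tested only for a.e.\ $t$) or $\beta=0$ (second step purely algebraic) — is routine, and the same computation applied to $-f,-g$ and to differences of solutions yields the comparison principle and uniqueness for \eqref{eq:28}–\eqref{eq:28b}.
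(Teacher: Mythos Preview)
Your argument is correct. Both proofs begin by testing the two equations of \eqref{eq:28b} against $u_+$ and $v_+$, arriving at exactly the inequalities you state. The paper then splits into three cases according to which of $\alpha,\beta$ is positive, adds the two inequalities with equal weights, and closes with Gronwall's lemma (using Poincar\'e when $\alpha=0$). You instead form the weighted combination $\kappa\times(\text{first})+c\times(\text{second})$; this choice equalises the cross-term coefficients ($pc=q\kappa$) so that a single Young inequality with $\varepsilon\in\big(\tfrac{\kappa}{2(\kappa+\lambda)},\tfrac12\big)$ makes \emph{both} diagonal coefficients strictly positive. The result is a one-line differential inequality with nonpositive right-hand side, which integrates directly to zero without Gronwall and without any case distinction on $\alpha,\beta$. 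What your approach buys is unification and brevity; what the paper's case analysis buys is that it works verbatim for $\lambda=0$ (your interval collapses there, although the result still follows with a small extra step). Since the paper assumes $\lambda>0$ throughout, your version is a clean alternative.
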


\begin{proof}

	Choosing $u_+$ as a test function in the first equation and $v_+$ we deduce that
\begin{align}
	\label{eq:comparison 1}
	\alpha \frac{d}{dt} \int_\Omega u_+^2 &+ \int_\Omega |\nabla u_+|^2 + C_1 \int_\Omega u_+^2 \le  C_1 \int_\Omega u_+ v  \le C_1 \int_\Omega u_+ v_+ \\
	\label{eq:comparison 2}
	\beta \frac{d}{dt} \int_ \Omega v_+^2 & + C_2 \int_\Omega v_+^2 = \lambda \int_\Omega u v_+ \le \lambda \int_\Omega u_+ v_+  
\end{align}
\textbf{Case 1. $\alpha, \beta > 0$.} Therefore
\begin{equation*}
	 \frac{d}{dt} \int_\Omega \left( u_+^2 + v_+^2 \right) \le \left( \frac{C_1}{\alpha} + \frac {\lambda}{\beta}  \right) \int_\Omega u_+v_+ \le \frac{1}{2} \left( \frac{C_1}{\alpha} + \frac {\lambda}{\beta}  \right) \int_\Omega(u_+^2 + v_+^2)
\end{equation*}
Since $u(0) = v(0) = 0$ we deduce, using Gronwall's inequality, that
\begin{equation*}
	u_+ = v_+ = 0 \textrm{ in }Q^T 
\end{equation*}
\textbf{Case 2. $\alpha = 0$ and $\beta > 0$.} We apply Poincaré's inequality in \eqref{eq:comparison 1} and we deduce
\begin{align}
	c_P \int_\Omega u_+^2 & \le  C_1 \int_\Omega u_+ v_+ \\
	\beta \frac{d}{dt} \int_ \Omega v_+^2 & + C_2 \int_\Omega v_+^2 \le  \lambda \int_\Omega u_+ v_+  .
\end{align}
Joining the two computations and applying Young's inequality
\begin{equation*}
	c_P \int_\Omega u_+^2 + \beta \frac{d}{dt} \int_ \Omega v_+^2 \le  C_1 \int_\Omega u_+ v_+ \le c_P \int_ \Omega u_+^2 + C_3 \int_\Omega v_+^2.
\end{equation*}
Hence, we can apply Gronwall's inequality to deduce $v_+ = 0$. Therefore, due to \eqref{eq:comparison 1}, $u_+ = 0$. \\

\noindent \textbf{Case 3. $\alpha > 0$ and $\beta = 0$} In this case we have that
\begin{equation*}
	 \frac{d}{dt} \int_\Omega  u_+^2 + C_2 \int_{\Omega}v_+^2 \le \left( \frac{C_1}{\alpha} + {\lambda}  \right) \int_\Omega u_+v_+ \le C_4 \int_\Omega u_+^2 + C_2 \int_\Omega v_+^2.
\end{equation*}
Hence, we can apply Gronwall's inequality to deduce $u_+ = 0$ and, through \eqref{eq:comparison 2}, $v_+ = 0$.
This completes the proof.
\end{proof} 

Uniqueness solutions of \eqref{eq:28} follows as an immediate consequence. 

\section*{Acknowledgments}

The research of J.I. D\'{\i}az and D. G\'{o}mez-Castro was partially supported by the project ref. MTM2017-85449-P of the DGISPI (Spain).


\begin{thebibliography}{99}

\bibitem{Amman-Fila} H. Amann and M. Fila. A Fujita-type theorem for the Laplace equation with a dynamical boundary condition, Acta Math. Univ. Comenian., 66, 2, 321-328 (1997).

\bibitem{Angulano} M. Angulano. Existence, uniqueness and homogenization of nonlinear parabolic problems with dynamical boundary conditions in perforated media. arXiv:1712.01183 4. Dec 2017.

\bibitem{Arrieta-Quitner-Bernal} J.M. Arrieta, P. Quittner and A. Rodríguez-Bernal. Parabolic problems with nonlinear dynamical boundary conditions and singular initial data, \emph{Diff. and Int. Eqns}, \textbf{14}, 12, 1487-1510 (2001).

\bibitem{Bandle-Reichel} C. Bandle, J. von Below and W. Reichel. Parabolic problems with dynamical boundary conditions: eigenvalue expansions and blow up, \emph{Atti della Accademia Nazionale dei Lincei}, Classe di Scienze Fisiche, Matematiche e Naturali, Rendiconti Lincei Matematica E Applicazioni 2006, 35-67 (2006).


\bibitem{BeDiVr} I. Bejenaru, J.I. D\'{\i}az and I.I. Vrabie. An abstract approximate controllability result and applications to elliptic and parabolic systems with dynamical boundary conditions, Electr. J. Diff. Eqns., 50, 1-19, (2001).

\bibitem{Chipot} M. Chipot, D. Hilhorst, D. Kinderlehrer, M. Olech, Contraction in $L^{1}$ for a system arising in chemical reactions and molecular motors, Differ. Equ. Appl. 1 (2009), no. 1, 139-151

\bibitem{Ci-Mu} D. Cioranescu and F. Murat. A Strange Term Coming from Nowhere. In A. Cherkaev and R. Kohn, editors, Topics in Mathematical Modelling of Composite Materials, pages 45--94. Springer Science+Business Media,LLC, New York, 1997

\bibitem{Ci-Saint} D. Cioranescu and J. Saint Jean Paulin. Homogenization of Reticulated Structures (Vol. 136). New York, NY: Springer New York (1999). doi:10.1007/978-1-4612-2158-6

\bibitem{CDLT} C. Conca, J.I. Díaz, A. Li\~{n}\'{a}n, C. Timofte, Homogenization in chemical reactive flows, Electronic Journal of Differential Equations, Vo. 2004, No. 40, 1-22 (2004).


\bibitem{DGPS} J. I. Díaz, D. G\'{o}mez-Castro, A. V. Podol'skiy, and T. A. Shaposhnikova. Characterizing the strange term in critical size homogenization: quasilinear equations with a nonlinear boundary condition involving a general maximal monotone graph. Advances in Nonlinear Analysis, 2019; 8: 679--693,

\bibitem{6} J. I. Díaz, D. Gomez-Castro, T.A. Shaposhnikova, M.N. Zubova. Change of homogenized absorption term im diffusion processes with reaction on the boundary of periodically distributed asymmetric particles of critical size.// Electronic Journal of Differential Equations. V. 2017 (2017), 178, P.1-25.

\bibitem{7} J. I. Díaz, D. Gomez-Castro, A.V. Podol'skii, T.A. Shaposhnikova. Homogenization of a net of periodic critically scaled boundary obstacles related to reverse osmosis "nano-composite" membranes. Advances in Nonlinear Analysis,2018.DOI:10.1515/anona-2018-0158.

\bibitem{DiSt} J. I. Díaz, I. Stakgold. Mathematical aspects of the combuston of a solid by distribued isothermal gas reaction. SIAM. Journal of Mathematical Analysis, Vol 26, No2, 305-328, 1994


\bibitem{Escher} J. Escher. Quasilinear parabolic systems with dynamical boundary conditions.Comm Partial Differential Equations 1993; 18:1309--1364.

\bibitem{GLPSHZ} D. Gomez, M. Lobo, M.E. Perez, T.A. Shaposhnikova, M.N. Zubova. On critical parameters in homogenization of perforated domains by thin tubes with nonlinear flux and related spectral problems. Mathematical Methods in the Applied Science, 2014, DOI:10.1002/mma32.46.

\bibitem{Gon} Goncharenko, M. V. . Asymptotic behavior of the third boundary-value problem in domains with fine-grained boundaries. In A. Damlamian (Ed.), Proceedings of the Conference ``Homogenization and Applications to Material Sciences’’ (Nice, 1995) (pp. 203–213). Tokyo: Gakkötosho. (1997)

\bibitem{Jager10} W. Jäger, M. Neuss-Radu and T.A. Shaposhnikova. Homogenization limit for the diffusion equation with nonlinear flux condition on the boundary of very thin holes periodically distributed in a domain, in case of a critical size. Doklady Mathematics, 82(2), 736–740 (2010). doi:10.1134/s1064562410050157

\bibitem{Jager11} W. Jager, M. Neuss-Radu, T.A. Shaposhnikova. Homogenization of the diffusion equation with nonlinear flux condition on the interior boundary of the perforated domain -- the influence of the scaling on the nonlinearity in the effective sink-source term. Journal of Math. Science, 2011, V. 179, issue 3, P. 446-459.

\bibitem {10} J.L. Lions. Quelques m\'ethodes de r\'esolution des probl\`{e}mes aux limites non lin\'eaires. Paris: Dunod. (1969)


\bibitem{Marchenko} V. A. Marchenko and E. Y. Khruslov. Boundary-value problems with fine-grained boundary. Mat. Sb. (N.S.), 65(3):458--472, 1964.

\bibitem{OS95} O.A. Oleinik and T.A. Shaposhnikova . On homogenization problems for the Laplace operator in partially perforated domains with Neumann’s condition on the boundary of cavities. Atti della Accademia Nazionale dei Lincei. Classe di Scienze Fisiche, Matematiche e Naturali. Rendiconti Lincei. Matematica e Applicazioni, 6(3), 133–142. (1995)


\bibitem{ShZ} T.A. Shaposhnikova, M.N. Zubova. Homogenization of variational inequalities for the Laplace operator with nonlinear constraint on the flow in a domain, perforated b arbitrary shaped sets. Critical case. Journal of Mathematical Science (Springer), V. 232, 2018, P. 573-591.

\bibitem{Tartar} L. Tartar. \emph{The General Theory of Homogenization -- A Personalized Introduction}. Springer-Verlag. Berlin (2010)

\bibitem{Timofte} C. Timofte. Parabolic problems with dynamical boundary conditions in perforated media. Math. Modelling and Analysis, V.8, 2003, 4, P. 337-350.

\bibitem{Timofte2008} C. Timofte. Upscaling in dynamical heat transfer problem in biological tissue. Acta Physica Polonica B. Vol 39 No 10 (2008), 2811--2822.

\bibitem{Vazquez-Vitillaro} J. L. V\'{a}zquez and E. Vitillaro. On the Laplace equation with dynamical boundary conditions of reactive-diffusive type, \emph{J. Math. Anal. Appl.}, \textbf{354} 2, 674-688, (2009).

\bibitem{Wang} W. Wang, J. Duan, Homogenized dynamics of stochastic partial differential equations with dynamical boundary conditions, Communications in Mathematical Physics, 275, No. 1, 163-186 (2007).

\bibitem{ZSh} M. N. Zubova, T. A. Shaposhnikova. Homogenization of boundary value problems in perforated domains with the third boundary condition and resulting change in the character of the nonlinearity in the problem. Differ. Equ. 47(2011), P. 78-90.


\end{thebibliography}
\end{document}